\documentclass[reqno]{amsart}
\usepackage{mathrsfs}
\newtheorem{theorem}{Theorem}[section]

\theoremstyle{definition}

\theoremstyle{remark}
\newtheorem{remark}[theorem]{Remark}

\numberwithin{equation}{section}

\newcommand{\di}{\displaystyle}

\begin{document}

\title[Zagier's Conjecture]{On Zagier's Conjecture About the Inverse of a Matrix Related to Double Zeta Values}
\author{Yawen Ma and Lee-Peng Teo}

\address{Department of Mathematics, Xiamen University, Jalan Sunsuria, Bandar Sunsuria, 43900, Sepang, Selangor, Malaysia.}

\email{m1102439194@gmail.com, lpteo@xmu.edu.my}

\subjclass[2020]{Primary 11B65, 11B68, 11M32}

\date{\today}

\keywords{Double zeta values, Bernoulli numbers, generating functions.}

\begin{abstract}

We prove a conjecture of Zagier about the inverse of a $(K-1)\times (K-1)$ matrix $A=A_{K}$ using elementary methods. This formula  allows one to  express the the product of single zeta values $\zeta(2r)\zeta(2K+1-2r)$, $1\leq r\leq K-1$, in terms of the double zeta values $\zeta(2r, 2K+1-2r)$, $1\leq r\leq K-1$ and $\zeta(2K+1)$.  
\end{abstract}
\maketitle

\section{Introduction}
 This paper addresses a conjecture of Zagier he put forward in the paper \cite{Zagier2012}. 
Following \cite{Zagier2012}, for positive integers $k_1, \ldots, k_n$ with $k_n\geq 2$, define the multiple zeta value $\zeta(k_1, k_2, \ldots, k_n)$  by
\begin{align}\label{eq1}
\zeta\left(k_1, k_2, \ldots, k_n\right)=&\sum_{1\leq m_1< \ldots< m_n }\frac{1}{m_1^{k_1}\ldots m_n^{k_n}}.
\end{align}$k=k_1+k_2+\ldots+k_n$ is called the weight of this multiple zeta value.

When $n=1$, we have the classical Riemann zeta value
$$\zeta(k)=\sum_{m=1}^{\infty}\frac{1}{m^k}.$$
When $n=2$,  the double sum
\begin{align}\label{eq2}
\zeta(k_1,k_2)=\sum_{m=2}^{\infty}\frac{1}{m^{k_2}}\sum_{j=1}^{m-1}\frac{1}{j^{k_1} }
\end{align}has been considered by Euler.  

Let $H(0)=1$ and define
$$H(n)=\zeta(\underbrace{2, 2, \ldots, 2}_n)\quad\text{for $n\geq 1$}.$$ It is well known that for $n\geq 0$, 
 $$H(n)=\frac{\pi^{2n}}{(2n+1)!}.$$When $a$ and $b$ are nonnegative integers, define 
 $$H(a, b)=\zeta(\underbrace{2,\ldots,2}_a,3, \underbrace{2,\ldots,2}_b).$$ 
In \cite{Zagier2012}, Zagier derived the formula
\begin{align}\label{eq30}
H(a, b)=2\sum_{r=1}^{K}(-1)^r\left[\begin{pmatrix}2r\\2a+2\end{pmatrix}-\left(1-\frac{1}{2^{2r}}\right)\begin{pmatrix}2r\\2b+1\end{pmatrix}\right]H(K-r)\zeta(2r+1),
\end{align}where $K=a+b+1$.
Here for a real number  $n$ and a nonnegative integer $k$, we define the generalized binomial coefficient $\di\begin{pmatrix} n\\k\end{pmatrix}$ by
\begin{align*}
\begin{pmatrix} n\\k\end{pmatrix}=\begin{cases}\di\frac{n(n-1)\ldots (n-k+1)}{k! },\quad & k\geq 1,\\
\quad 1,\quad & k=0.\end{cases}
\end{align*}
In particular, if $n$ is an integer and $n<k$, $\begin{pmatrix} n\\k\end{pmatrix}=0.$

 The formula  \eqref{eq30} expresses   $H(a, b)$ as rational linear combinations of  $H(K-r)\zeta(2r+1)$. It can be used to prove that the following two sets
 \begin{align*}
 \mathscr{B}_1=&\left\{H(a, K-1-a)\,|\, 0\leq a\leq K-1\right\}, \\
 \mathscr{B}_2=&\left\{H(K-r)\zeta(2r+1)\,|\, 1\leq r\leq K\right\} 
 \end{align*} span the same vector space over $\mathbb{Q}$. 
 
 On the other hand, Euler has found that all double zeta values of odd weight can be expressed as rational linear combinations of the Riemann zeta values. In particular, when $1\leq r\leq K-1$,
\begin{align}\label{eq40}
\zeta(2r, 2K+1-2r)=&-\frac{1}{2}\zeta(2K+1) +\sum_{s=1}^{K-1}A_{r,s}\zeta(2s)\zeta(2K+1-2s),
\end{align}where 
\begin{equation}\label{eq11}A_{r,s}=\begin{pmatrix} 2K-2s\\2r-1\end{pmatrix}+\begin{pmatrix} 2K-2s\\2K-2r\end{pmatrix}.\end{equation}
In \cite{Zagier2012}, Zagier used an elementary argument to show that the $(K-1)\times (K-1)$ matrix $A=A_K=[A_{r,s}]$   has nonzero determinant, and thus it is invertible.  Using the fact that both $\zeta (2s)$ and $H(2s)$  are rational multiples of $\pi^{2s}$, this shows that 
the set $$\mathscr{B}_3=\left\{ \zeta(2r, 2K+1-2r)\,|\, 1\leq r\leq K-1\right\}\cup\left\{\zeta(2K+1)\right\}$$ spans the same  $\mathbb{Q}$-vector space as the set $\mathscr{B}_2$. 

In \cite{Zagier2012}, Zagier formulated three conjectures about the matrix $A_K$. The first one is about its determinant, the second one is about its $LU$-decomposition, and the third one is about its inverse. The main objective of this  paper is to prove the third conjecture, which states a pair of conjectural formulas for $A^{-1}$.

Let $P$ and $Q$ be the $(K-1)\times (K-1)$ matrices with entries  
\begin{align*}
P_{s,r}=&\frac{2}{2s-1}\sum_{n=0}^{2K-2s}\begin{pmatrix} 2r-1\\2K-2s-n+1\end{pmatrix} \begin{pmatrix} n+2s-2\\n\end{pmatrix}B_n,\\
Q_{s,r}=&-\frac{2}{2s-1}\sum_{n=0}^{2K-2s}\begin{pmatrix} 2K-2r\\2K-2s-n+1\end{pmatrix} \begin{pmatrix} n+2s-2\\n\end{pmatrix}B_n.
\end{align*}Here $B_n$ are the Bernoulli numbers. 

 Zagier conjectured that both $P$ and $Q$ are the inverse of $A$. This implies that $P$ and $Q$ must be the same matrix. This conjecture was proved by D. Ma in   \cite{Ma2016} using generating functions. In this work, we use a totally different approach.
 
 In Section \ref{equalmatrix}, we will prove that $P=Q$. In Section \ref{conjecture}, we will prove that they indeed give the inverse of $A$. 
 
 \vspace{1cm}
\noindent \textbf{Acknowledgements.}\; This work is supported by the XMUM Research Fund XMUMRF/2018-C2/IMAT/0003.

\section{The equality of the two conjectural formulas}
\label{equalmatrix}
Recall that the Bernoulli numbers $B_n$ are defined by the generating function \cite{Ireland1990}:
$$\frac{t}{e^t-1}=\sum_{n=0}^{\infty}\frac{B_n}{n!}t^n.$$  One can compute $B_n$ recursively by $B_0=1$,  and  
$$B_n=-n!\sum_{k=0}^{n-1}\frac{B_k}{k! (n-k+1)!},\quad\text{for}\; n\geq 1.$$It is well known that $B_1=-1/2$, and for any odd integer $n$ larger than 1, $B_n=0$. 

Let $K$ be an integer greater than or equal to 2, and let $P$ and $Q$ be the $(K-1)\times (K-1)$ matrices with entries defined by
\begin{equation}\label{eq2}\begin{split}
P_{s,r}=&\frac{2}{2s-1}\sum_{n=0}^{2K-2s}\begin{pmatrix} 2r-1\\2K-2s-n+1\end{pmatrix} \begin{pmatrix} n+2s-2\\n\end{pmatrix}B_n,\\
Q_{s,r}=&-\frac{2}{2s-1}\sum_{n=0}^{2K-2s}\begin{pmatrix} 2K-2r\\2K-2s-n+1\end{pmatrix} \begin{pmatrix} n+2s-2\\n\end{pmatrix}B_n.
\end{split}\end{equation}Our goal is to prove that $P_{s,r}=Q_{s,r}$ for all $1\leq r,s\leq K-1$ using a generating function technique that is totally different from that used in \cite{Ma2016}. \footnote{In \cite{Zagier2012}, the summations of $n$ in \eqref{eq2} are taken to be until the term $n=2K-2s+1$. However, since $s\leq K-1$, $2K-2s+1$ is an odd number greater  than 2, and so $B_{2K-2s+1}=0$. Thus the summations  can be taken to be until $n=2K-2s$ only.} We begin with the following theorem which is interesting of its own right.

\begin{theorem}
Let $s$ be a positive integer. Define the function $f_s(t)$ by
$$f_s(t)=\frac{t^{2s-1}}{e^t-1}.$$ If $m$ is a positive integer, we have the following relation that relates the dertivatives of $f_s$ up to  order $m$.
\begin{equation}\label{eq1}\begin{split}
e^tf_s^{(m)}(t)=&\sum_{p=0}^{m} (-1)^{m-p} \begin{pmatrix} m\\p\end{pmatrix} f_s^{(p)}(t)\\&+\sum_{p=0}^{\min\{m,2s-1\}}(-1)^{m-p} \begin{pmatrix} m\\p\end{pmatrix} \frac{(2s-1)!}{(2s-1-p)!}t^{2s-1-p}.
\end{split}\end{equation} 
\end{theorem}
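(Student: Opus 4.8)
The plan is to derive \eqref{eq1} from a single application of the Leibniz differentiation rule, the point being that the defining relation of $f_s$ lets us trade the awkward factor $e^t$ for $f_s$ itself. Concretely, since $f_s(t)(e^t-1)=t^{2s-1}$, we have
\begin{equation*}
e^t f_s(t)=f_s(t)+t^{2s-1},
\end{equation*}
and I would set $g(t):=e^tf_s(t)$. First I would note that $e^t-1=t(1+\tfrac t2+\cdots)$, so $f_s(t)=t^{2s-2}/(1+\tfrac t2+\cdots)$ is real-analytic at $t=0$; hence $g$ is smooth near the origin and all the derivatives appearing below are legitimate.

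Next I would write $f_s=e^{-t}g$ and differentiate $m$ times, using $(e^{-t})^{(k)}=(-1)^ke^{-t}$, to get
\begin{equation*}
f_s^{(m)}(t)=\sum_{k=0}^{m}\binom{m}{k}(-1)^ke^{-t}g^{(m-k)}(t).
\end{equation*}
Multiplying by $e^t$ and reindexing via $p=m-k$ (so that $(-1)^k=(-1)^{m-p}$ and $\binom mk=\binom mp$) produces the compact identity
\begin{equation*}
e^tf_s^{(m)}(t)=\sum_{p=0}^{m}(-1)^{m-p}\binom{m}{p}g^{(p)}(t).
\end{equation*}
Finally I would substitute $g^{(p)}(t)=f_s^{(p)}(t)+\bigl(t^{2s-1}\bigr)^{(p)}$ and split the sum in two. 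The first piece is literally $\sum_{p=0}^m(-1)^{m-p}\binom mp f_s^{(p)}(t)$. In the second, $\bigl(t^{2s-1}\bigr)^{(p)}=\frac{(2s-1)!}{(2s-1-p)!}t^{2s-1-p}$ for $0\le p\le 2s-1$ and is identically $0$ for $p>2s-1$, so only the indices $p\le\min\{m,2s-1\}$ survive, which is exactly the second sum in \eqref{eq1}.

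I do not expect a genuine obstacle here; the single spot requiring care is the upper limit of the second sum — one must observe that $\bigl(t^{2s-1}\bigr)^{(p)}\equiv 0$ once $p>2s-1$, so the naive range $0\le p\le m$ collapses to $0\le p\le\min\{m,2s-1\}$ with nothing gained or lost. A secondary point is the analyticity of $f_s$ at $t=0$, which matters because \eqref{eq1} will later be used through Taylor expansions at the origin; this is immediate from the power series of $e^t-1$. An alternative proof by induction on $m$ via Pascal's rule is available but strictly messier than the direct Leibniz computation above.
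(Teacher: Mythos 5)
Your proposal is correct and is essentially the paper's own argument: both rest on the identity $e^tf_s(t)=f_s(t)+t^{2s-1}$ (the paper writes it as $f_s=e^{-t}f_s+t^{2s-1}e^{-t}$), an application of the Leibniz rule with $(e^{-t})^{(k)}=(-1)^ke^{-t}$, the truncation of the $t^{2s-1}$ derivatives at $p=2s-1$, and a final multiplication by $e^t$. Your introduction of $g=e^tf_s$ and the remark on analyticity at $t=0$ are only cosmetic additions to the same computation.
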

\begin{proof}
By the definition of $f_s(t)$, we have
\begin{align*}
f_s(t)=e^{-t}f_s(t)+t^{2s-1}e^{-t}.
\end{align*}Differentiate both sides $m$ times and apply Leibniz rule, we have
\begin{align*}
f_s^{(m)}(t)=&\sum_{p=0}^m \begin{pmatrix} m\\p\end{pmatrix} f_s^{(p)}(t)\frac{d^{m-p}}{dt^{m-p}}e^{-t}+\sum_{p=0}^{m} \begin{pmatrix} m\\p\end{pmatrix} \frac{d^p}{dt^p}t^{2s-1}\frac{d^{m-p}}{dt^{m-p}}e^{-t}.\end{align*}Since
$$\frac{d^p}{dt^{p}}t^{2s-1}=0\hspace{1cm}\text{if}\; p>2s-1,$$ we find that
\begin{align*}
 f_s^{(m)}(t)=&\sum_{p=0}^{m} (-1)^{m-p} \begin{pmatrix} m\\p\end{pmatrix} e^{-t}f_s^{(p)}(t)\\&+
\sum_{p=0}^{\min\{m,2s-1\}}(-1)^{m-p} \begin{pmatrix} m\\p\end{pmatrix}e^{-t} \frac{(2s-1)!}{(2s-1-p)!}t^{2s-1-p}.
\end{align*}Multiply both sides by $e^t$ give 
\eqref{eq1}.
\end{proof}

Now we can prove the main theorem in this section.
\begin{theorem}
 If $K$ is a positive integer larger than or equal to 2, $r$ and $s$ are positive integers less than $K$, then 
 \begin{equation}\label{eq5}
 \begin{split} &\frac{(2s-2)!}{(2K-2r)!} \sum_{n=0}^{2K-2s}\begin{pmatrix} 2K-2r\\2K-2s-n+1\end{pmatrix} \begin{pmatrix} n+2s-2\\n\end{pmatrix}B_n\\=&-\frac{(2s-2)!}{(2K-2r)!} \sum_{n=0}^{2K-2s}\begin{pmatrix} 2r-1\\2K-2s-n+1\end{pmatrix} \begin{pmatrix} n+2s-2\\n\end{pmatrix}B_n.\end{split}\end{equation} In particular, this implies that the matrices $P$ and $Q$ defined by \eqref{eq2} are equal.
\end{theorem}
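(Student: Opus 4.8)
The plan is to translate the identity \eqref{eq5} into a statement purely about the derivatives of $f_s$ at the origin, and then to exploit the elementary functional equation $e^tf_s(t)=f_s(-t)$ — equivalently the relation $f_s(t)=e^{-t}f_s(t)+t^{2s-1}e^{-t}$ that opens the proof of the preceding theorem — together with a conjugation identity for the operator $1+\frac{d}{dt}$.

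First I would cancel the common factor $\frac{(2s-2)!}{(2K-2r)!}$ from both sides of \eqref{eq5}. Writing
\[
T_1=\sum_{n=0}^{2K-2s}\binom{2r-1}{2K-2s-n+1}\binom{n+2s-2}{n}B_n,\qquad
T_2=\sum_{n=0}^{2K-2s}\binom{2K-2r}{2K-2s-n+1}\binom{n+2s-2}{n}B_n,
\]
the content of \eqref{eq5} is exactly $T_2=-T_1$, and since $P_{s,r}=\tfrac{2}{2s-1}T_1$ and $Q_{s,r}=-\tfrac{2}{2s-1}T_2$ by the definitions of $P$ and $Q$, this is precisely $P_{s,r}=Q_{s,r}$. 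Expanding $f_s(t)=t^{2s-2}\cdot\tfrac{t}{e^t-1}=\sum_{n\geq0}\tfrac{B_n}{n!}t^{n+2s-2}$ gives $f_s^{(n+2s-2)}(0)=\tfrac{(n+2s-2)!}{n!}B_n$, hence $\binom{n+2s-2}{n}B_n=\tfrac{1}{(2s-2)!}f_s^{(n+2s-2)}(0)$. Setting $j=n+2s-2$, and noting that the extra terms this introduces are harmless — $f_s^{(j)}(0)=0$ for $j<2s-2$, while the term $j=2K-1$ carries the factor $B_{2K-2s+1}=0$ because $2K-2s+1$ is an odd integer $\geq3$ — I would rewrite
\[
(2s-2)!\,T_1=\sum_{j\geq0}\binom{2r-1}{2K-1-j}f_s^{(j)}(0),\qquad
(2s-2)!\,T_2=\sum_{j\geq0}\binom{2K-2r}{2K-1-j}f_s^{(j)}(0).
\]

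Next I would use the elementary generating-function identity $\sum_{j}\binom{a}{N-j}t^j=t^{N-a}(1+t)^a$ to recognize each sum as $\bigl[C(\tfrac{d}{dt})f_s\bigr](0)$ for a polynomial $C$. With $u=2r-1$ and $v=2K-2r$, so that $u+v=2K-1$, this reads
\[
(2s-2)!\,T_1=\Bigl[\Bigl(\tfrac{d}{dt}\Bigr)^{v}\Bigl(1+\tfrac{d}{dt}\Bigr)^{u}f_s\Bigr](0),\qquad
(2s-2)!\,T_2=\Bigl[\Bigl(\tfrac{d}{dt}\Bigr)^{u}\Bigl(1+\tfrac{d}{dt}\Bigr)^{v}f_s\Bigr](0).
\]
The heart of the argument is the identity $\bigl(1+\tfrac{d}{dt}\bigr)^{a}g=e^{-t}\bigl(e^tg\bigr)^{(a)}$, valid for any smooth $g$ and provable by a one-line induction on $a$. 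Applying it with $g=f_s$ and using $e^tf_s(t)=f_s(-t)$ gives $\bigl(1+\tfrac{d}{dt}\bigr)^{u}f_s(t)=(-1)^ue^{-t}f_s^{(u)}(-t)$; differentiating $v$ more times via the Leibniz rule and setting $t=0$ yields
\[
(2s-2)!\,T_1=(-1)^{u+v}\sum_{\ell\geq0}\binom{v}{\ell}f_s^{(u+\ell)}(0).
\]
Since $u+v=2K-1$ is odd the sign is $-1$, and re-indexing by $j=u+\ell$ together with $\binom{v}{\ell}=\binom{v}{2K-1-j}$ identifies the right-hand sum with $(2s-2)!\,T_2$. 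Hence $T_1=-T_2$, which is \eqref{eq5}, and therefore $P=Q$.

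The main obstacle I anticipate is not conceptual but a matter of bookkeeping: one must check carefully that enlarging the summation ranges to all $j\geq0$ — in particular admitting $j=2K-1$ — changes nothing, which is exactly where $B_{2K-2s+1}=0$ and the vanishing of $f_s^{(j)}(0)$ for $j<2s-2$ are used, and one must keep track of the signs produced by repeatedly differentiating $f_s^{(u)}(-t)$. The decisive feature is the parity of $u+v=2K-1$: it is this single odd exponent that converts an otherwise symmetric computation into the sign-reversing relation $T_1=-T_2$ rather than $T_1=T_2$. (As an alternative to the operator identity, one could instead substitute the relation $(-1)^mf_s^{(m)}(0)=\sum_{p=0}^m\binom{m}{p}f_s^{(p)}(0)$ — obtained by differentiating $e^tf_s(t)=f_s(-t)$ at $0$ — into the formula for $T_1$, interchange the order of summation, and evaluate the binomial convolution $\sum_{j}(-1)^j\binom{2r-1}{2K-1-j}\binom{j}{p}$ to $-\binom{2K-2r}{2K-1-p}$.)
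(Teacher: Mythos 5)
Your argument is correct: writing both sides of \eqref{eq5} as $\sum_{j}\binom{a}{2K-1-j}f_s^{(j)}(0)$ with $a=2r-1$ and $a=2K-2r$ respectively is legitimate (the extension of the index range is exactly covered by $f_s^{(j)}(0)=0$ for $j<2s-2$ and $B_{2K-2s+1}=0$, both of which you check), the conjugation $\bigl(1+\tfrac{d}{dt}\bigr)^{a}=e^{-t}\bigl(\tfrac{d}{dt}\bigr)^{a}e^{t}$ is standard, the reflection $e^{t}f_s(t)=f_s(-t)$ is equivalent to the relation $f_s(t)=e^{-t}f_s(t)+t^{2s-1}e^{-t}$ used in the paper, and the Leibniz computation with the sign $(-1)^{u+v}=(-1)^{2K-1}=-1$ does yield $T_1=-T_2$, hence $P=Q$. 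Your route lives in the same circle of ideas as the paper's—the same function $f_s$, its reflection identity, and extraction of the coefficient of $t^{2K-2r}$ after $2r-1$ differentiations—but the packaging is genuinely different and cleaner in one respect: by formulating both sides symmetrically as $\bigl[D^{a}(1+D)^{b}f_s\bigr](0)$ with $a+b=2K-1$ and using the full reflection $e^tf_s(t)=f_s(-t)$, you make the minus sign fall out of the parity of $2K-1$ alone, and you avoid the paper's explicit polynomial remainder term and its case split ($r+s\le K$ versus $r+s\ge K+1$) in which the $n=1$ term with $B_1=-\tfrac12$ must be combined by hand with that remainder. The price you pay is the careful range-extension bookkeeping at the outset, which the paper's proof handles instead by tracking the remainder; both costs are modest, and your version arguably exposes more clearly why the answer is antisymmetric under $2r-1\leftrightarrow 2K-2r$.
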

\begin{proof}The left hand side of \eqref{eq5} can be rewritten as
\begin{equation}\label{eq6}\begin{split}&
\sum_{n=\max\{0, 2r-2s+1\}}^{2K-2s}\frac{1}{(2K-2s-n+1)!(2s-2r+n-1)!} \frac{(n+2s-2)!}{n!}B_n,\end{split}
\end{equation}and the right hand side of \eqref{eq5} can be rewritten as
\begin{equation}\label{eq4}\begin{split}&
-\sum_{n=\max\{0, 2K-2s-2r+2\}}^{2K-2s}\frac{(2r-1)!}{(2K-2s-n+1)!(2r+2s-2K+n-2)!} \frac{(n+2s-2)!}{(2K-2r)!}\frac{B_n}{n!}.\end{split}
\end{equation}
The proof of \eqref{eq5} is by taking $m=2r-1$ in the equation \eqref{eq1} and  comparing the coefficients of $t^{2K-2r}$ on both sides. Namely, we want to compare the coefficients of $t^{2K-2r}$ on both sides of the equation
\begin{equation}\label{eq3}\begin{split}
e^tf_s^{(2r-1)}(t)=&-\sum_{p=0}^{2r-1} (-1)^{p} \begin{pmatrix} 2r-1\\p\end{pmatrix} f_s^{(p)}(t)\\&-\sum_{p=0}^{\min\{2r-1,2s-1\}}(-1)^{p} \begin{pmatrix} 2r-1\\p\end{pmatrix} \frac{(2s-1)!}{(2s-1-p)!}t^{2s-1-p}.
\end{split}\end{equation} 
Notice that
$$f_s(t)=t^{2s-2}\times\frac{t}{e^t-1}=\sum_{n=0}^{\infty}\frac{B_n}{n!}t^{n+2s-2}.$$Therefore, 
\begin{align*}
f_{s}^{(p)}(t)=\sum_{n=max\{0, p-2s+2\}}^{\infty}B_n\frac{(n+2s-2)!}{(n+2s-2-p)!n!}t^{n+2s-2-p}.
\end{align*}First we consider the coefficient of $t^{2K-2r}$ in the left hand side of \eqref{eq3}, namely, the coefficient of $t^{2K-2r}$ in
\begin{align}\label{eq8}
e^tf_s^{(2r-1)}(t)=\sum_{k=0}^{\infty}\frac{t^k}{k!}\sum_{n=max\{0, 2r-2s+1\}}^{\infty}B_n\frac{(n+2s-2)!}{(n+2s-2r-1)!n!}t^{n+2s-2r-1}.
\end{align}It is given by the expression \eqref{eq6}.

The term on the right hand side of \eqref{eq3} can be written as $T_1+T_2$, where
\begin{align*}
T_1=&-\sum_{p=0}^{2r-1} (-1)^{p} \begin{pmatrix} 2r-1\\p\end{pmatrix} f_s^{(p)}(t)\\
=&-\sum_{p=0}^{2r-1}(-1)^p\frac{(2r-1)!}{p!(2r-1-p)!}\sum_{n=\max\{0,  p-2s+2\}}^{\infty}B_n\frac{(n+2s-2)!}{(n+2s-2-p)!n!}t^{n+2s-2-p}\\
=&-\sum_{n=0}^{\infty}\sum_{p=0}^{\min\{2r-1,n+2s-2\}}(-1)^p\frac{(2r-1)!}{p!(2r-1-p)!}B_n\frac{(n+2s-2)!}{(n+2s-2-p)!n!}t^{n+2s-2-p},\\
T_2=&-\sum_{p=0}^{\min\{2r-1,2s-1\}}(-1)^{p} \frac{(2r-1)!}{p!(2r-1-p)!}\frac{(2s-1)!}{(2s-1-p)!}t^{2s-1-p}.
\end{align*}
 $T_2$ contains a term in $t^{2K-2r}$ if and only if $2s-1\geq 2K-2r$, or equivalently, $r+s\geq K+1$. In this case the coefficient of $t^{2K-2r}$ in $T_2$ is
\begin{align}\label{eq9}
\frac{(2r-1)!}{(2r+2s-2K-1)!(2K-2s)!}\frac{(2s-1)!}{(2K-2r)!}.
\end{align}For the term $T_1$, the coefficient of $t^{2K-2r}$ is
\begin{equation}\label{eq7}\begin{split}
&-\sum_{n=\max\{0, 2K-2s-2r+2\}}^{2K-2s}(-1)^n\frac{(2r-1)!}{(2K-2s-n+1)!(2r+2s-2K+n-2)!}\\&\hspace{8cm}\times \frac{(n+2s-2)!}{(2K-2r)!}\frac{B_n}{n!}. 
\end{split}\end{equation}
When $r+s\leq K$, $2K-2s-2r+2\geq 2$. Hence, the sum over $n$ in \eqref{eq7} does not contain $n=1$ term. Since $B_n=0$ when $n$ is odd and larger than 2, we find that \eqref{eq7} is equal to 
 \eqref{eq4}. Since there are no contribution from $T_2$ to the term $t^{2K-2r}$ when $r+s\leq K$, this proves that when $r+s\leq K$, the coefficient of $t^{2K-2r}$ in the right hand side of 
 \eqref{eq3} is \eqref{eq4}.
 
 When $r+s\geq K+1$, there is a term with $n=1$ in \eqref{eq7}. Using the fact that $\di B_1=-\frac{1}{2}$, we find that this term is given by
 \begin{align*}
-\frac{1}{2}\frac{(2r-1)!}{(2r+2s-2K-1)!(2K-2s)!}\frac{(2s-1)!}{(2K-2r)!},
\end{align*}which is $-1/2$ of the term \eqref{eq9}. Summing the coefficients of $t^{2K-2r}$ from $T_1$ and $T_2$, we find that the sum is equal to \eqref{eq4}. Therefore, when $r+s\geq K+1$, the coefficient of $t^{2K-2r}$ in the right hand side of 
 \eqref{eq3} is \eqref{eq4}.

Thus, we have shown that the coefficient of $t^{2K-2r}$ in the left hand side of \eqref{eq3} is \eqref{eq6}, and the coefficient of $t^{2K-2r}$ in the right hand side of \eqref{eq3} is \eqref{eq4}, this completes the proof of the theorem.

\end{proof}
As we mentioned before, this theorem has been proved in \cite{Ma2016}  using a totally different method, with the help of the Carlitz's Bernoulli number identity \cite{Carlitz1971, Prodinger2014}. Our proof uses directly the generating function of the Bernoulli numbers. 

\begin{remark} Carlitz's identity says that for any nonnegative integers $m$ and $n$,
\begin{equation}\label{eq10} (-1)^m\sum_{k=0}^m\begin{pmatrix} m\\k\end{pmatrix} B_{n+k}=(-1)^n\sum_{k=0}^n\begin{pmatrix} n\\k\end{pmatrix} B_{m+k}.\end{equation}
Prodinger \cite{Prodinger2014} gave a short  proof using an exponential generating function of two variables.  Here we show that  this identity can be derived directly from \eqref{eq1} by setting $s=1$. Namely, we consider the generating function  of the Bernoulli numbers
$$f(t)=\frac{t}{e^t-1}.$$Since \eqref{eq9} is symmetric in $m$ and $n$, it is sufficient to consider the  case $0\leq m< n$. In this case, $n\geq 1$. 
Equation \eqref{eq1} says that
$$e^tf^{(m)}(t)=(-1)^m t+(-1)^{m-1}m+\sum_{k=0}^{m}(-1)^{m-k}\begin{pmatrix} m\\k\end{pmatrix}f^{(k)}(t).$$
This gives
\begin{align*}
\sum_{l=0}^{\infty}\frac{t^l}{l!}\sum_{k=m}^{\infty}\frac{B_k}{(k-m)!}t^{k-m}=(-1)^mt+(-1)^{m-1}m+\sum_{k=0}^m (-1)^{m-k}\begin{pmatrix} m\\k\end{pmatrix}\sum_{l=k}^{\infty}\frac{B_l}{(l-k)!}t^{l-k}.
\end{align*}Compare the coefficients of $t^n$ on both sides, we have
\begin{align*}
\sum_{k=m}^{m+n}\frac{1}{(k-m)!(m+n-k)!}B_k=&(-1)^m\delta_{n,1}+\sum_{k=0}^{m}(-1)^{m-k}\begin{pmatrix} m\\k\end{pmatrix} \frac{B_{n+k}}{n!}.
\end{align*}If $n=1$, the last sum contains the term $(-1)^m B_1$, which can be combined with $(-1)^m\delta_{n,1}$ to yield
$(-1)^{m-1}B_1.$ Since $B_k=0$ when $k$ is odd and greater than 2, we find that 
 \begin{align*}
\sum_{k=m}^{m+n}\frac{n!}{(k-m)!(m+n-k)!}B_k=& \sum_{k=0}^{m}(-1)^{m-n}\begin{pmatrix} m\\k\end{pmatrix} B_{n+k}.
\end{align*}Multiplying $(-1)^n$ on both sides and shifting the summation variables on the left hand side, we obtain
\begin{align*}
(-1)^n\sum_{k=0}^{n}\begin{pmatrix} n\\k\end{pmatrix}B_{m+k}=& (-1)^m\sum_{k=0}^{m} \begin{pmatrix} m\\k\end{pmatrix} B_{n+k},
\end{align*}
which is the Carlitz identity.

\end{remark}
 
\section{The proof of the conjecture}\label{conjecture}
In this section, we prove that the inverse of the matrix $A=A_K$ is the matrix $P$.
\begin{theorem}
[Zagier's Conjecture]~\\
If $K$ is an integer larger than 1, $A_K$ is the $(K-1)\times (K-1)$ matrix defined by \begin{equation}\label{eq12}A_{r,s}=\begin{pmatrix} 2K-2s\\2r-1\end{pmatrix}+\begin{pmatrix} 2K-2s\\2K-2r\end{pmatrix},\end{equation} then the inverse of $A$ is the matrix $P$ defined by  one of the following two formulas that are equal.
\begin{equation}\label{eq13}\begin{split}
P_{s,r}=&\frac{2}{2s-1}\sum_{n=0}^{2K-2s}\begin{pmatrix} 2r-1\\2K-2s-n+1\end{pmatrix} \begin{pmatrix} n+2s-2\\n\end{pmatrix}B_n,\\
=&-\frac{2}{2s-1}\sum_{n=0}^{2K-2s}\begin{pmatrix} 2K-2r\\2K-2s-n+1\end{pmatrix} \begin{pmatrix} n+2s-2\\n\end{pmatrix}B_n.
\end{split}\end{equation}
\end{theorem}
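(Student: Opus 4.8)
The plan is to prove that $P$ is a one‑sided inverse of $A$; since both are $(K-1)\times(K-1)$ matrices this forces $P=A^{-1}$ (and re‑derives that $A$ is invertible). Concretely I would establish $PA=I_{K-1}$, that is,
\[
\sum_{r=1}^{K-1}P_{s,r}A_{r,s'}=\delta_{s,s'},\qquad 1\le s,s'\le K-1,
\]
using the first formula in \eqref{eq13} for $P_{s,r}$ together with $A_{r,s'}=\binom{2K-2s'}{2r-1}+\binom{2K-2s'}{2K-2r}$. The first step is bookkeeping: substitute, interchange the two finite sums, and pull the $r$‑summation inside, rewriting the left side as
\[
\frac{2}{2s-1}\sum_{n=0}^{2K-2s}\binom{n+2s-2}{n}B_n\,\Sigma(n),\qquad \Sigma(n)=\sum_{r=1}^{K-1}\binom{2r-1}{2K-2s-n+1}\left[\binom{2K-2s'}{2r-1}+\binom{2K-2s'}{2K-2r}\right].
\]

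The core of the proof is to evaluate $\Sigma(n)$ in closed form. I would split it as $\Sigma_1(n)+\Sigma_2(n)$ along the two terms of $A_{r,s'}$. For $\Sigma_1$, the absorption identity $\binom{j}{a}\binom{b}{j}=\binom{b}{a}\binom{b-a}{j-a}$ (with $j=2r-1$) reduces the sum over the odd integers to $\binom{2K-2s'}{2K-2s-n+1}$ times a partial sum of a single row of Pascal's triangle over an arithmetic progression, evaluated by $\sum_{k\ \mathrm{even}}\binom{M}{k}=2^{M-1}$ (for $M\ge1$) and its odd‑index analogue. For $\Sigma_2$, the point is that $(2r-1)+(2K-2r)=2K-1$ does not depend on $r$, so $\Sigma_2(n)$ is a genuine Vandermonde‑type convolution and has a closed form of the same shape, obtained as a coefficient extraction from a rational generating function, up to an explicit boundary term coming from the excluded index $2r-1=2K-1$. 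Since $B_n=0$ for odd $n>1$, only $n=0$, $n=1$ and even $n$ contribute; the parity of $2K-2s-n+1$ then toggles between the even‑ and odd‑index versions of the power‑of‑two identity, which is where the dependence on the sign of $s-s'$ enters and the evaluation splits into the cases $s<s'$, $s=s'$, $s>s'$ — the (near‑)triangular structure that is the subject of Zagier's companion conjectures about the $LU$‑decomposition of $A$.

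With $\Sigma(n)$ in closed form, the last step is to collapse the remaining sum over $n$ and read off $\delta_{s,s'}$. The driving identity is the Bernoulli recursion $\sum_{k=0}^{m}\binom{m+1}{k}B_k=\delta_{m,0}$ (equivalently, the defining generating function $t/(e^t-1)$ of the $B_n$): after the substitutions above, the $n$‑sum should reorganize into exactly this form, vanishing in the off‑diagonal cases and yielding $1$ when $s=s'$. I expect this final collapse to be the main obstacle — forcing the binomial‑and‑Bernoulli sum into the precise shape of the recursion, with the correct index shift, and dispatching the boundary contributions ($n=0$, $n=1$, and small $s$ or $s'$) cleanly. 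As an alternative that stays closer to the method of Section~\ref{equalmatrix}, one can instead use the relation $P_{s,r}=-\dfrac{2\,(2K-2r)!}{(2s-1)!}\,[t^{2K-2r}]\bigl(e^{t}f_s^{(2r-1)}(t)\bigr)$, which is precisely what the proof of the second theorem of Section~\ref{equalmatrix} establishes, substitute the expression \eqref{eq3} for $e^{t}f_s^{(2r-1)}(t)$, and express $\sum_r P_{s,r}A_{r,s'}$ as a single coefficient extraction to which the functional equation \eqref{eq1} can be applied directly; the difficulty there is that both the differentiation order $2r-1$ and the extraction point $2K-2r$ vary with $r$, so one must first assemble everything into a two‑variable generating function before \eqref{eq1} becomes usable.
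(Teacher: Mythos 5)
Your overall strategy (show $PA=I$ entrywise, split $A_{r,s'}=\binom{2K-2s'}{2r-1}+\binom{2K-2s'}{2K-2r}$, reduce via the absorption identity to partial row sums $\sum_{k\ \mathrm{even}}\binom{M}{k}=\sum_{k\ \mathrm{odd}}\binom{M}{k}=2^{M-1}$) is the right one and matches the paper's for the first piece, but there is a genuine gap in your treatment of $\Sigma_2(n)=\sum_{r}\binom{2r-1}{2K-2s-n+1}\binom{2K-2s'}{2K-2r}$. This is not a Vandermonde-type convolution: writing $j=2r-1$, the varying quantity appears as the \emph{upper} index of $\binom{j}{2K-2s-n+1}$ and in the \emph{lower} index of $\binom{2K-2s'}{2K-1-j}$, so neither absorption nor Vandermonde applies, and the parity-restricted sum $\sum_{j\ \mathrm{odd}}\binom{j}{a}\binom{2K-2s'}{2K-1-j}$ (a coefficient of $x^{a}(1+x)^{2K-2s'}(1-x)^{-a-1}$, odd part) has no closed form of the ``binomial times power of two'' shape you assert; the constancy of $(2r-1)+(2K-2r)$ does not rescue this. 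Moreover, your hoped-for final collapse via the recursion $\sum_{k}\binom{m+1}{k}B_k=\delta_{m,0}$ is exactly the step you flag as the main obstacle, and you give no argument that the $n$-sum actually reorganizes into that shape for $s\ne s'$.

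The paper's proof avoids both difficulties by exploiting the equality of the two expressions in \eqref{eq13}, which is precisely why that equality ($P=Q$) is proved first in Section \ref{equalmatrix}: against $B_{r,s'}=\binom{2K-2s'}{2r-1}$ one uses the first expression (with $\binom{2r-1}{\cdot}$), while against $C_{r,s'}=\binom{2K-2s'}{2K-2r}$ one uses the second expression (with $\binom{2K-2r}{\cdot}$), so that the trinomial-revision identity applies to both pieces and each inner $r$-sum becomes a partial row sum equal to a power of two. After that, no Bernoulli identity beyond $B_1=-\tfrac12$ and $B_{\mathrm{odd}>1}=0$ is needed: for $s\ne s'$ the resulting $n$-sums for $(PB)_{s,s'}$ and $(PC)_{s,s'}$ cancel termwise, and for $s=s'$ the diagonal entry $1$ comes from the single $n=1$, $r=s$ term in $(PC)_{s,s}$. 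If you want to keep only the first expression for $P$, you must either evaluate $\Sigma_2(n)$ honestly (which is substantially harder than you indicate) or switch to the paper's device; your alternative sketch via a two-variable generating function built on \eqref{eq1} likewise remains a plan rather than a proof, since you acknowledge without resolving the difficulty that both the differentiation order and the extraction point vary with $r$.
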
 
\begin{proof}
Define the $(K-1)\times (K-1)$ matrices $B$ and $C$ by
\begin{align*}
B_{r,s}=\begin{pmatrix} 2K-2s\\2r-1\end{pmatrix},\hspace{1cm}C_{r,s}=\begin{pmatrix} 2K-2s\\2K-2r\end{pmatrix},
\end{align*}so that $A=B+C$. Notice that $B_{r,s}=0$ if $r+s> K$, and $C_{r,s}=0$ if $r<s$. 

The strategy of proof is to  show that the matrix $PA=PB+PC$ is indeed the identity matrix, by showing that if $s$ and $s'$ are positive integers less than $K$, then
\begin{align}\label{eq17}
(PB)_{s,s'}+(PC)_{s,s'}=\begin{cases} 1,\quad & s=s',\\0,\quad & s\neq s'.\end{cases}
\end{align}We will use the following two elementary identities of combination numbers.
If $n\geq 1$, then
\begin{equation}\label{eq14}\begin{split}
\sum_{k=0}^{\lfloor n/2\rfloor}\begin{pmatrix} n\\2k\end{pmatrix}=&2^{n-1},\\
\sum_{k=0}^{\lfloor (n-1)/2\rfloor}\begin{pmatrix} n\\2k+1\end{pmatrix}=&2^{n-1}.
\end{split}\end{equation}First we compute $(PB)_{s,s'}$ using the first formula in \eqref{eq13} for $P_{s,r}$. The cases where $s\leq s'$ and $s>s'$ are considered separately.

If $s\leq s'$, then for $r\leq K-s'$, we have $r\leq K-s$. Hence, $2K-2s-2r+2\geq 2$. Therefore, 
\begin{align*}
&(PB)_{s,s'}\\
=&\frac{2}{2s-1}\sum_{r=1}^{K-s'}\begin{pmatrix} 2K-2s'\\2r-1\end{pmatrix}\sum_{n=  2K-2s-2r+2}^{2K-2s}\begin{pmatrix} 2r-1\\2K-2s-n+1\end{pmatrix} \begin{pmatrix} n+2s-2\\n\end{pmatrix}B_n\\
=&\frac{2}{2s-1}\sum_{n=2s'-2s+2}^{2K-2s}\sum_{r=K-s-\frac{n}{2}+1}^{K-s'}\begin{pmatrix} 2K-2s'\\2r-1\end{pmatrix}\begin{pmatrix} 2r-1\\2K-2s-n+1\end{pmatrix} \begin{pmatrix} n+2s-2\\n\end{pmatrix}B_n. \end{align*}
Notice that the summation over $n$ only contains terms with $n$ even since $2s'-2s+2\geq 2$.

 It can be easily verified that 
\begin{align*}
\begin{pmatrix} 2K-2s'\\2r-1\end{pmatrix}\begin{pmatrix} 2r-1\\2K-2s-n+1\end{pmatrix} =& \begin{pmatrix} 2K-2s'\\2s-2s'+n-1\end{pmatrix}\begin{pmatrix} 2s-2s'+n-1\\2r+2s-2K+n-2\end{pmatrix}.
\end{align*}Therefore,
\begin{align*}
(PB)_{s,s'}=&\frac{2}{2s-1}\sum_{n=2s'-2s+2}^{2K-2s}   \begin{pmatrix} 2K-2s'\\2s-2s'+n-1\end{pmatrix}\begin{pmatrix} n+2s-2\\n\end{pmatrix}B_n  \\&\hspace{3cm} \times\sum_{r=K-s-\frac{n}{2}+1}^{K-s'}\begin{pmatrix} 2s-2s'+n-1\\2r+2s-2K+n-2\end{pmatrix}.
\end{align*} For $n\geq 2s'-2s+2$, we have $2s-2s'+n-1\geq 1$. The first formula  in \eqref{eq14} implies that
\begin{equation}\label{eq22}\begin{split}
\sum_{r=K-s-\frac{n}{2}+1}^{K-s'}\begin{pmatrix} 2s-2s'+n-1\\2r+2s-2K+n-2\end{pmatrix}
=&\sum_{r=0}^{s-s'+n/2-1}\begin{pmatrix} 2s-2s'+n-1\\2r\end{pmatrix}\\=&2^{2s-2s'+n-2}.\end{split}\end{equation}
This shows that when $s\leq s'$, 
\begin{align}\label{eq25}
(PB)_{s,s'}=&\frac{2}{2s-1}\sum_{n=2s'-2s+2}^{2K-2s}  \begin{pmatrix} 2K-2s'\\2s-2s'+n-1\end{pmatrix}\begin{pmatrix} n+2s-2\\n\end{pmatrix}2^{2s-2s'+n-2}B_n.
\end{align}
When $s>s'$,  
\begin{align*}
(PB)_{s,s'}=&\frac{2}{2s-1} \sum_{n=0}^{2K-2s}\sum_{K-s-\frac{n}{2}+1\leq r\leq K-s'}\begin{pmatrix} 2K-2s'\\2r-1\end{pmatrix}\begin{pmatrix} 2r-1\\2K-2s-n+1\end{pmatrix} \begin{pmatrix} n+2s-2\\n\end{pmatrix}B_n.
\end{align*}Splitting out the   $n=1$ term, we have
\begin{align*}
(PB)_{s,s'}=&\frac{2}{2s-1}\Biggl\{\sum_{\substack{0\leq n\leq 2K-2s\\n\,\text{is even}}}\begin{pmatrix} 2K-2s'\\2s-2s'+n-1\end{pmatrix}\begin{pmatrix} n+2s-2\\n\end{pmatrix}B_n\\&\hspace{4cm}\times\sum_{r=K-s-\frac{n}{2}+1}^{K-s'} \begin{pmatrix} 2s-2s'+n-1\\2r+2s-2K+n-2\end{pmatrix} \\
 &\hspace{1cm}+\begin{pmatrix} 2K-2s'\\2s-2s'\end{pmatrix}\begin{pmatrix}2s-1\\1\end{pmatrix}B_1\sum_{r=K-s +1}^{K-s'} \begin{pmatrix} 2s-2s'\\2r+2s-2K-1\end{pmatrix} \Biggr\}.\end{align*}Notice that the second formula in \eqref{eq14} give
$$\sum_{r=K-s +1}^{K-s'}  \begin{pmatrix} 2s-2s'\\2r+2s-2K-1\end{pmatrix} =\sum_{r=0}^{s-s'-1} \begin{pmatrix} 2s-2s'\\2r+1\end{pmatrix} =2^{2s-2s'-1}.$$Together with \eqref{eq22}, we find that when $s>s'$,  
\begin{align*}
(PB)_{s,s'}=&\frac{2}{2s-1} \sum_{n=0}^{2K-2s}  \begin{pmatrix} 2K-2s'\\2s-2s'+n-1\end{pmatrix}  \begin{pmatrix} n+2s-2\\n\end{pmatrix}2^{2s-2s'+n-2}B_n.
\end{align*} 

Next we compute $(PC)_{s,s'}$. Using the second expression in \eqref{eq13} for $P_{s,r}$, we have
\begin{align*}
&(PC)_{s,s'}\\
=&-\frac{2}{2s-1}\sum_{r=s'}^{K-1}\begin{pmatrix} 2K-2s'\\2K-2r\end{pmatrix} \sum_{n=\max\{0,2r-2s+1\}}^{2K-2s}\begin{pmatrix} 2K-2r\\2K-2s-n+1\end{pmatrix} \begin{pmatrix} n+2s-2\\n\end{pmatrix}B_n.
\end{align*}Again, it is easy to verify that
\begin{align*}
\begin{pmatrix} 2K-2s'\\2K-2r\end{pmatrix} \begin{pmatrix} 2K-2r\\2K-2s-n+1\end{pmatrix} =\begin{pmatrix} 2K-2s'\\2s-2s'+n-1\end{pmatrix}\begin{pmatrix} 2s-2s'+n-1\\2r-2s'\end{pmatrix}. 
\end{align*}
Now we discuss the cases $s<s'$, $s=s'$ and $s>s'$ separately. 

When $s=s'$,
\begin{align*}
(PC)_{s,s}=&-\frac{2}{2s-1}\sum_{r=s}^{K-1}\begin{pmatrix} 2K-2s\\2K-2r\end{pmatrix} \sum_{n= 2r-2s+1 }^{2K-2s}\begin{pmatrix} 2K-2r\\2K-2s-n+1\end{pmatrix} \begin{pmatrix} n+2s-2\\n\end{pmatrix}B_n.
\end{align*}In this case, we have a $n=1$ term when $r=s$. When $r>s$, summation over $n\geq 2r-2s+1$ is the same as summation over $2r-2s+2$.  The term with $r=s$ and $n=1$ contribute the term 1. Therefore,
\begin{align*}
(PC)_{s,s}=& 1-\frac{2}{2s-1}\sum_{r=s}^{K-1}\begin{pmatrix} 2K-2s\\2K-2r\end{pmatrix} \sum_{n=2r-2s+2}^{2K-2s}\begin{pmatrix} 2K-2r\\2K-2s-n+1\end{pmatrix} \begin{pmatrix} n+2s-2\\n\end{pmatrix}B_n\\
=&1-\frac{2}{2s-1}\sum_{n=2}^{2K-2s}\sum_{r=s}^{s+n/2-1}\begin{pmatrix} 2K-2s\\n-1\end{pmatrix}\begin{pmatrix} n-1\\2r-2s\end{pmatrix} \begin{pmatrix} n+2s-2\\n\end{pmatrix}B_n\\
=&1-\frac{2}{2s-1}\sum_{n=2}^{2K-2s}\begin{pmatrix} 2K-2s\\n-1\end{pmatrix} \begin{pmatrix} n+2s-2\\n\end{pmatrix}2^{n-2}B_n.
\end{align*}The last equality follows from the first equation in \eqref{eq14}. Compare to the   $s=s'$ case in \eqref{eq25}   show that
$$(PB+PC)_{s,s}=1.$$

Next we consider the case $s<s'$. In this case, if $r\geq s'$, then $r>s$ and hence $2r-2s+1\geq 3$. Therefore,
\begin{align*}
&(PC)_{s,s'}\\=&-\frac{2}{2s-1}\sum_{r=s'}^{K-1}  \sum_{n=2r-2s+2}^{2K-2s}\begin{pmatrix} 2K-2s'\\2s-2s'+n-1\end{pmatrix}\begin{pmatrix} 2s-2s'+n-1\\2r-2s'\end{pmatrix}  \begin{pmatrix} n+2s-2\\n\end{pmatrix}B_n\\
=&-\frac{2}{2s-1}\sum_{n=2s'-2s+2}^{2K-2s}\begin{pmatrix} 2K-2s'\\2s-2s'+n-1\end{pmatrix}\begin{pmatrix} n+2s-2\\n\end{pmatrix}B_n\sum_{r=s'}^{s+n/2-1} \begin{pmatrix} 2s-2s'+n-1\\2r-2s'\end{pmatrix} \\
=&-\frac{2}{2s-1}\sum_{n=2s'-2s+2}^{2K-2s}\begin{pmatrix} 2K-2s'\\2s-2s'+n-1\end{pmatrix}\begin{pmatrix} n+2s-2\\n\end{pmatrix}2^{2s-2s'+n-2}B_n\\
=&-(PB)_{s,s'}.
\end{align*}Finally, we consider the case $s>s'$.  In this case
 \begin{align*}
&(PC)_{s,s'}\\=&-\frac{2}{2s-1} \sum_{n=0}^{2K-2s}\sum_{s'\leq r\leq s+n/2-1} \begin{pmatrix} 2K-2s'\\2K-2r\end{pmatrix} \begin{pmatrix} 2K-2r\\2K-2s-n+1\end{pmatrix} \begin{pmatrix} n+2s-2\\n\end{pmatrix}B_n.
\end{align*}Splitting out the   $n=1$ term, we have
\begin{align*}
&(PC)_{s,s'}\\ 
=&-\frac{2}{2s-1}\Biggl\{ \sum_{\substack{0\leq n\leq 2K-2s\\n\,\text{is even}}} \begin{pmatrix} 2K-2s'\\2s-2s'+n-1\end{pmatrix} \begin{pmatrix} n+2s-2\\n\end{pmatrix}B_n\sum_{r=s'}^{s+n/2-1}\begin{pmatrix} 2s-2s'+n-1\\2r-2s'\end{pmatrix} \\&  \hspace{2cm}+\begin{pmatrix} 2K-2s'\\2s-2s'\end{pmatrix}\begin{pmatrix} 2s-1\\1\end{pmatrix}B_1\sum_{r=s'}^s\begin{pmatrix} 2s-2s'\\2r-2s'\end{pmatrix}\Biggr\} \\
=&-\frac{2}{2s-1} \sum_{n=0}^{2K-2s} \begin{pmatrix} 2K-2s'\\2s-2s'+n-1\end{pmatrix} \begin{pmatrix} n+2s-2\\n\end{pmatrix}2^{2s-2s'+n-2}B_n\\
=&-(PB)_{s,s'}.
\end{align*}This completes the proof of \eqref{eq17}, and so the assertion of the theorem is proved. 
\end{proof}

\bibliographystyle{amsplain}

\begin{thebibliography}{10}
 
\bibitem{Carlitz1971} L. Carlitz. Problem 795. Math. Mag. \textbf{44} (1971), 107.

\bibitem{Gangl2006} H. Gangl, M. Kaneko and D. Zagier, \emph{Double zeta values and modular forms}, in \emph{Automorphic Forms and Zeta Functions}, S. B\"ocherer et. al. (eds.), World Scientific, Singapore, 2006, pp. 71--106.
\bibitem{Ireland1990} K. Ireland and M. Rosen, \emph{A classical introduction to modern number theory}, 2nd ed., Springer-Verlag, 1990, ISBN 0-387-97329-X.
\bibitem{Ma2016} D. Ma, \emph{Inverse of some matrix related to double zeta values of odd weight}, J. Number Theory \textbf{166} (2016), 166--180.
\bibitem{Prodinger2014} H. Prodinger, \emph{Carlitz’s symmetric Bernoulli number identity}, J. Integer Sequences \textbf{17} (2014), article 14.4.1.

\bibitem{Zagier2012}  D. Zagier, \emph{Evaluation of the multiple zeta values $\zeta(2,\ldots,2,3,2,\ldots,2)$}, Ann. Math. \textbf{175} (2012), 977--1000.

\end{thebibliography}

\end{document}